\documentclass{amsart}

\usepackage{color}
\newtheorem{thm}{Theorem}
\newtheorem{lem}[thm]{Lemma}

\newtheorem{fact}[thm]{Fact}

\begin{document}

\title[Corrigendum to "The optimal range of the Calderon operator."]{Corrigendum to the paper "The optimal range of the Calderon operator and its applications." [J. Funct. Anal. 277 (2019), no. 10, 3513--3559.]}

\author[]{$^{1}$F. Sukochev, $^{2}$K. Tulenov, and $^{3}$D. Zanin}
\address{$^{1,3}$School of Mathematics and Statistics, University of New South Wales, Kensington,  2052, Australia}

\address{$^{2}$Al-Farabi Kazakh National University and Institute of Mathematics and Mathematical Modeling, Almaty, Kazakhstan}
\email{$^{1}$f.sukochev@unsw.edu.au, $^{2}$tulenov@math.kz, $^{3}$d.zanin@unsw.edu.au}

\keywords{triangular truncation operator, discrete Hilbert transform, discrete Calder\'{o}n operator, lower distributional estimate.}
\begin{abstract} We fix a gap in the proof of a result in our earlier paper \cite{STZ}.
\end{abstract}

\maketitle

In this note we rectify a mistake contained in Lemma 22 in \cite[Section 5]{STZ}.
 Because of a mistake in the proof of \cite[Lemma 22]{STZ}, the main results of such section \cite[Theorem 21]{STZ} is not proved in full generality. We provide an alternative proof of \cite[Lemma 22]{STZ} and show that \cite[Theorem 21]{STZ}  still holds.

\section{Preliminaries}

For convenience, we keep almost all notations as in \cite{STZ}. So, for undefined notations and notions below, we refer the reader to \cite{STZ}.

Define the triangular truncation (with respect to a continuous chain) as usual: if the operator $A$ is an integral operator on the Hilbert space $L_2(-\pi,\pi),$ with the integral kernel $K,$ i.e.
$(Af)(t)=\int_{-\pi}^{\pi}K(t,s)f(s)ds, \quad t\in(-\pi,\pi),\quad f\in L_2(-\pi,\pi),$
then $T(A)$ is an integral operator with truncated integral kernel
$$((T(A))f)(t)=\int_{-\pi}^{\pi} {\rm sgn}(t-s) K(t,s)f(s)ds,\quad t\in(-\pi,\pi),\quad f\in L_2(-\pi,\pi).$$

It is established in \cite{STZ} that the operator $T$ acts from the ideal $\Lambda_{\log}(B(L_2(-\pi,\pi)))$ into $B(L_2(-\pi,\pi)),$ where $B(L_2(-\pi,\pi))$ is the algebra of all linear bounded operators on $L_2(-\pi,\pi).$  It is also claimed there that $\Lambda_{\log}(B(L_2(-\pi,\pi)))$ is the maximal domain of $T.$ Unfortunately, the proof of this claim contains a substantial oversight: the statement in \cite[Lemma 22]{STZ} is actually false. Nevertheless, we are able to fix this mistake and ensure that \cite[Theorem 21]{STZ} remains true as stated.

Recall the following auxiliary operators from \cite{STZ}. If $x \in \Lambda_{{\rm log}}(\mathbb{Z})$, then the {\it discrete Hilbert-type transform} $\mathcal{H}_d$ is defined by the formula
\begin{equation}\label{H dis}
(\mathcal{H}_dx)(n)=\frac1{\pi}\sum_{\substack{m\in\mathbb{Z}\\ m-n=1{\rm mod}2}}\frac{x(m)}{m-n},\quad n\in\mathbb{Z}.
\end{equation}
Also, define the {\it discrete version of the Calder\'{o}n operator} $S_d$ as
\begin{equation}\label{S dis}
S_dx=C_dx+C_d^{\ast}x=\frac{1}{n+1}\sum_{k=0}^{n}x(k)+\sum_{k=n}^{\infty}\frac{x(k)}{k+1},\quad x \in \Lambda_{{\rm log}}(\mathbb{Z}_{+}).
\end{equation}

\section{Lower distributional estimate for triangular truncation}\label{tt app}


The following result replaces Theorem 21 in \cite{STZ}.
\begin{thm}\label{tt lower thm} For every $x\in\Lambda_{{\rm log}}(\mathbb{Z}_+),$ there exists an operator $a\in B(L_2(-\pi,\pi))$ such that $\mu(a)=\mu(x)$ and
$$\mu(T(a))\geq \frac1{8\pi}S_d\mu(x).$$
\end{thm}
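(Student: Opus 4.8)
The plan is to realise $\mu(x)$ as the singular value sequence of an operator that is \emph{diagonal} in the exponential basis of $L_2(-\pi,\pi)$, and then to observe that a well chosen compression of its triangular truncation is again diagonal, with diagonal entries of exactly the discrete Hilbert-type shape appearing in \eqref{H dis}. The theorem will then reduce to an elementary pointwise comparison of these entries with $S_d\mu(x)$.

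First I would reduce to the case that $\mu(x)$ is finitely supported by a routine approximation: truncating $\mu(x)$ to its first $N$ coordinates, using that $T$ is bounded on $\Lambda_{\log}(B(L_2(-\pi,\pi)))$, that singular value functions are $1$-Lipschitz in the operator norm, and the monotone convergence $S_d\mu(x^{(N)})\uparrow S_d\mu(x)$, one passes to the limit. So assume $\mu(x)$ is supported on $\{0,\dots,N-1\}$; set $e_k(t)=(2\pi)^{-1/2}e^{ikt}$ and let $a$ be the operator which is diagonal in the basis $\{e_k\}_{k\in\mathbb{Z}}$ with $\langle ae_{2j},e_{2j}\rangle=\mu(x)(j)$ for $0\le j\le N-1$ and all other diagonal entries $0$. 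Then $\mu(a)=\mu(x)$, and $a$ is convolution with a trigonometric polynomial, so its kernel has the form $K_a(t,s)=k_a(t-s)$ and $T(a)=2\mathcal{T}_a-a$, where $(\mathcal{T}_af)(t)=\int_{-\pi}^{t}K_a(t,s)f(s)\,ds$ is the ``causal part'' of $a$.

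The crucial step is a direct computation of the matrix of $\mathcal{T}_a$, hence of $T(a)$, in the basis $\{e_k\}$: evaluating $\int_{-\pi}^{t}e^{i(l-k)s}\,ds$ one finds that for $k\neq l$ the entry $\langle T(a)e_l,e_k\rangle$ equals $\langle ae_k,e_k\rangle$ times a universal factor of modulus $\tfrac1{\pi|l-k|}$; in particular every row $k$ with $\langle ae_k,e_k\rangle=0$ has vanishing off-diagonal part. Now take $S=\{-1,-3,-5,\dots\}$, which is disjoint from the support $\{0,2,\dots,2N-2\}$ of the diagonal of $a$. Then the compression $P_S T(a)P_S$ is diagonal, and computing its diagonal entries gives, for $k=-(2m-1)$ with $m\ge 1$,
\[
\bigl|\langle T(a)e_k,e_k\rangle\bigr|=\frac1\pi\sum_{j=0}^{N-1}\frac{\mu(x)(j)}{2j+2m-1}=:v_m,
\]
which are positive and non-increasing in $m$; hence $\mu_n(T(a))\ge\mu_n(P_S T(a)P_S)=v_{n+1}$ for all $n\ge0$. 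It then remains only to prove the elementary inequality $v_m\ge\frac1{8\pi}S_d\mu(x)(m-1)$ for $m\ge1$: since all summands are positive there is no cancellation, and splitting the sum at $j=m$ and using $2j+2m-1\le 4m$ for $j\le m-1$, $2j+2m-1\le 4(j+1)$ for $j\ge m$, gives $\sum_{j<m}\frac{\mu(x)(j)}{2j+2m-1}\ge\frac14 C_d\mu(x)(m-1)$ and $\sum_{j\ge m}\frac{\mu(x)(j)}{2j+2m-1}\ge\frac14 C_d^{*}\mu(x)(m)$; absorbing $C_d^{*}\mu(x)(m-1)-C_d^{*}\mu(x)(m)=\mu(x)(m-1)/m$ into the $j=m-1$ summand of the first sum yields $\pi v_m\ge\frac18 S_d\mu(x)(m-1)$, and combining with $\mu_n(T(a))\ge v_{n+1}$ finishes the proof.

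I expect the one genuinely delicate point to be the matrix computation for $T(a)$: because $T$ is the truncation with respect to the \emph{continuous} chain on $L_2(-\pi,\pi)$, and not a discrete one, it cannot be read off from a discrete model, and one must work through $T(a)=2\mathcal{T}_a-a$ while keeping track of the boundary terms $e^{-i(l-k)\pi}$ produced by the integrals $\int_{-\pi}^{t}$. It is precisely the asymmetry coming from these terms — the off-diagonal entries of $T(a)$ in row $k$ being proportional to $\langle ae_k,e_k\rangle$ — that makes the compression to $S$ diagonal and carries the argument; everything after that, including the emergence of the constant $\tfrac1{8\pi}$, is bookkeeping.
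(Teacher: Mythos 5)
Your strategy is sound and, once unwound, is a frequency--side version of the paper's argument; but the pivotal matrix computation is misstated, and the justification of the key step as you wrote it is false. For $a$ diagonal in the basis $e_k(t)=(2\pi)^{-1/2}e^{ikt}$ with diagonal entries $a_k$, computing $\mathcal{T}_a$ via $\int_{-\pi}^{t}e^{i(l-m)s}\,ds=\frac{e^{i(l-m)t}-(-1)^{l-m}}{i(l-m)}$ for $m\neq l$ (and $t+\pi$ for $m=l$) gives, for $k\neq l$,
$$\langle T(a)e_l,e_k\rangle=\frac{(-1)^{l-k}}{\pi i\,(l-k)}\,(a_l-a_k),$$
i.e.\ the off-diagonal entry is proportional to the \emph{difference} $a_l-a_k$, not to $a_k$ alone. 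Hence your claim that ``every row $k$ with $\langle ae_k,e_k\rangle=0$ has vanishing off-diagonal part'' is wrong: such a row has entries proportional to $a_l$. What saves you is that you compress to $S=\{-1,-3,-5,\dots\}$, which is disjoint from the support $\{0,2,\dots,2N-2\}$ of the symbol, so that \emph{both} $a_k$ and $a_l$ vanish for $k,l\in S$ and $P_ST(a)P_S$ is indeed diagonal. Your diagonal entries are correct: $\langle T(a)e_k,e_k\rangle=\frac{i}{\pi}\sum_{m\neq k}\frac{a_m}{m-k}$, of modulus $v_m$ for $k=-(2m-1)$, and the concluding estimate $v_m\geq\frac1{8\pi}(S_d\mu(x))(m-1)$ checks out (it is in substance the paper's inequality $\frac1{2j+2m-1}\geq\frac14\min\{\frac1{j+1},\frac1{m}\}$). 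So the proof is reparable: replace the false structural claim by the displayed formula and everything else goes through, including the opening reduction to finitely supported $\mu(x)$.

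For comparison, the paper sidesteps the boundary terms $(-1)^{l-m}$ by compressing spatially rather than in frequency: with $p=M_{\chi_{(0,\pi)}}$ one has $t-s\in(-\pi,\pi)$ on the support of the kernel of $pT(a)p$, so Fact \ref{stz compute fact} turns $\mathrm{sgn}(t-s)\hat{x}(t-s)$ into $2i\widehat{\mathcal{H}_dx}(t-s)$ there; choosing the symbol supported on one parity class makes $\widehat{\mathcal{H}_dx}$ $\pi$-periodic, so $pT(a)p$ is diagonal in the basis $\pi^{-1/2}e^{2int}$ of $L_2(0,\pi)$ with eigenvalues $i(\mathcal{H}_dx)(2n)$ --- the same numbers $v_m$ you extract. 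The two compressions (a Fourier projection versus a multiplication operator) produce the same diagonal data and reduce the theorem to the same elementary comparison with $S_d\mu(x)$, so your route, once corrected, is not essentially different, only organized around an explicit matrix computation instead of Lemmas \ref{stz mistake lemma} and \ref{stz fix lemma}.
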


The proof of Theorem \ref{tt lower thm} relies on Lemma \ref{stz mistake lemma} and Lemma \ref{stz fix lemma} below.
The following fact is an easy exercis.

\begin{fact}\label{stz compute fact} If $x\in \ell_2(\mathbb{Z}),$ then
$$2i\widehat{\mathcal{H}_dx}(t)={\rm sgn}(t)\cdot\hat{x}(t),\quad t\in(-\pi,\pi),$$
where
$$\hat{x}(t)=\sum_{m\in\mathbb{Z}}x(m)e^{imt},\quad t\in(-\pi,\pi).$$
\end{fact}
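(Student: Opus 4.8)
The plan is to realise $\mathcal{H}_d$ as a convolution operator on $\ell_2(\mathbb{Z})$ and to identify its Fourier symbol. Putting $k=n-m$ in \eqref{H dis}, one sees that $(\mathcal{H}_dx)(n)=\sum_{k\in\mathbb{Z}}g(k)x(n-k)$, where $g$ is the odd sequence given by $g(k)=-\frac1{\pi k}$ for odd $k$ and $g(k)=0$ for even $k$ (in particular $g(0)=0$). Since $\sum_k|g(k)|^2<\infty$, the function $\hat{g}(t):=\sum_{k\in\mathbb{Z}}g(k)e^{ikt}$ belongs to $L_2(-\pi,\pi)$, and it is enough to show that
$$\hat{g}(t)=-\frac{i}{2}\,{\rm sgn}(t),\qquad t\in(-\pi,\pi).$$
Indeed, the Fourier transform $x\mapsto\hat{x}$ turns convolution into pointwise multiplication, so $\widehat{\mathcal{H}_dx}(t)=\hat{g}(t)\hat{x}(t)$, and multiplying by $2i$ (note $2i\cdot(-\tfrac{i}{2})=1$) yields $2i\widehat{\mathcal{H}_dx}(t)={\rm sgn}(t)\hat{x}(t)$, as claimed.

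To evaluate $\hat{g}$, pair the summation index $k$ with $-k$:
$$\hat{g}(t)=-\frac1{\pi}\sum_{k\ {\rm odd}}\frac{e^{ikt}}{k}=-\frac1{\pi}\sum_{k\geq1,\ k\ {\rm odd}}\frac{e^{ikt}-e^{-ikt}}{k}=-\frac{2i}{\pi}\sum_{k\geq1,\ k\ {\rm odd}}\frac{\sin(kt)}{k}.$$
The last series is the classical Fourier expansion of the square wave, $\sum_{k\geq1,\ k\ {\rm odd}}\frac{\sin(kt)}{k}=\frac{\pi}{4}\,{\rm sgn}(t)$ for $t\in(-\pi,\pi)$, whence $\hat{g}(t)=-\frac{i}{2}\,{\rm sgn}(t)$. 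Alternatively, and even more directly, one may compute the Fourier coefficients $\frac1{2\pi}\int_{-\pi}^{\pi}\bigl(-\tfrac{i}{2}\,{\rm sgn}(t)\bigr)e^{-ikt}\,dt$ and check that they coincide with $g(k)$; this bypasses the square-wave expansion altogether.

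I expect the only mildly delicate point to be a convergence issue: the series defining $\mathcal{H}_d$ in \eqref{H dis} --- equivalently the kernel series $\sum_kg(k)e^{ikt}$ --- converges only conditionally, so the two summations in $\widehat{\mathcal{H}_dx}$ cannot be interchanged without justification. This is handled by staying in $L_2$. The identity $\widehat{g\ast x}=\hat{g}\cdot\hat{x}$ holds as an equality of $L_2(-\pi,\pi)$ functions for every $x\in\ell_2(\mathbb{Z})$: it is a finite computation when $x$ is finitely supported, and both sides are continuous in $x\in\ell_2(\mathbb{Z})$, the right-hand side because $\hat{g}\in L_\infty(-\pi,\pi)$ so that multiplication by $\hat{g}$ is bounded on $L_2(-\pi,\pi)$; one then passes to general $x$ by density. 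No pointwise statement is required, which is consistent with the conclusion of the fact being an identity in $L_2(-\pi,\pi)$. The computation of $\hat{g}$ itself is routine.
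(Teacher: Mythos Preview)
Your proof is correct and is essentially the paper's argument, reorganized: the paper computes the Fourier coefficients of ${\rm sgn}(t)\hat{x}(t)$ directly (via the same integral $\int_{-\pi}^{\pi}{\rm sgn}(t)e^{ikt}\,dt$ that underlies your square-wave expansion) and reads off $2i(\mathcal{H}_dx)(n)$, whereas you factor through the convolution structure $\mathcal{H}_dx=g\ast x$ and identify the multiplier $\hat{g}$. Your ``alternative'' remark---computing the Fourier coefficients of $-\tfrac{i}{2}\,{\rm sgn}(t)$---is exactly the paper's route, and your density/continuity argument is in fact more careful than the paper about the passage from finitely supported $x$ to general $x\in\ell_2(\mathbb{Z})$.
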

\begin{proof} For every $k\in\mathbb{Z},$ we have
$$\int_{-\pi}^{\pi}{\rm sgn}(t)\cdot e^{ikt}dt=\int_0^{\pi}e^{ikt}dt-\int_{-\pi}^0e^{ikt}dt=
\begin{cases}
0,& k\mbox{ is even}\\
\frac{4i}{k},& k\mbox{ is odd}
\end{cases}.
$$
Thus, for every $n\in\mathbb{Z},$ we have
\begin{eqnarray*}\begin{split}\frac1{2\pi}\int_{-\pi}^{\pi}{\rm sgn}(t)\cdot\hat{x}(t)e^{-int}dt&=\frac1{2\pi}\sum_{m\in\mathbb{Z}}x(m)\int_{-\pi}^{\pi}{\rm sgn}(t)\cdot e^{i(m-n)t}dt\\
&=\frac{2i}{\pi}\sum_{\substack{m\in\mathbb{Z}\\ m-n=1{\rm mod}2}}\frac{x(m)}{m-n}\stackrel{\eqref{H dis}}=2i(\mathcal{H}_dx)(n).
\end{split}\end{eqnarray*}
\end{proof}

In the next lemma, we consider the von Neumann algebra $B(L_2(-\pi,\pi))$ and identify it with $B(L_2(\mathbb{T})).$ The purpose of the lemma below is to identify the mistake in \cite[Lemma 22]{STZ} (this mistake would be alleviated in Lemma \ref{stz fix lemma} below).

Denote by $\mathcal{D}$ the differential operator $\mathcal{D}:=\frac 1{i}\frac{d}{dt}$ defined on the Sobolev space $W^{1,2}(\mathbb{T}).$

\begin{lem}\label{stz mistake lemma} If $x\in\Lambda_{{\rm log}}(\mathbb{Z}),$ $a=x(\mathcal{D}),$ then
$$T(a)=2ip(\mathcal{H}_dx)(\mathcal{D})p+2iq(\mathcal{H}_dx)(\mathcal{D})q+px(\mathcal{D})q-qx(\mathcal{D})p,$$
where $p=M_{\chi_{(0,\pi)}}$ and $q=M_{\chi_{(-\pi,0)}}$ are multiplication operators on $L_2(\mathbb{T}).$
\end{lem}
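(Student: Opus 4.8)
The plan is to compute the integral kernel of $a=x(\mathcal{D})$ explicitly, decompose $T(a)$ into the four corners determined by the orthogonal projections $p$ and $q$, and identify each corner separately; the algebra then collapses to the claimed formula via Fact \ref{stz compute fact}. I would first reduce to the case of finitely supported $x$: since $T$ acts boundedly from $\Lambda_{{\rm log}}(B(L_2(\mathbb{T})))$ into $B(L_2(\mathbb{T}))$ (as established in \cite{STZ}), and, using the results of \cite{STZ}, both sides of the asserted identity depend operator-norm continuously on $x$ with respect to the norm of $\Lambda_{{\rm log}}(\mathbb{Z})$, in which finitely supported sequences are dense, it suffices to treat $x$ with finite support. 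For such $x$ the function $\hat{x}$ is a trigonometric polynomial, and since $\mathcal{D}$ has eigenfunctions $e^{int}$ with eigenvalue $n$, the operator $a=x(\mathcal{D})$ is the Fourier multiplier (convolution) operator on $L_2(\mathbb{T})$ with smooth bounded integral kernel
$$K(t,s)=\frac1{2\pi}\sum_{n\in\mathbb{Z}}x(n)e^{in(t-s)}=\frac1{2\pi}\hat{x}(t-s),\qquad t,s\in(-\pi,\pi).$$
Consequently, by the definition of $T$, the operator $T(a)$ has integral kernel $\mathrm{sgn}(t-s)K(t,s)=\frac1{2\pi}\mathrm{sgn}(t-s)\hat{x}(t-s)$.

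Next, writing $T(a)=pT(a)p+pT(a)q+qT(a)p+qT(a)q$ via $p+q=1$, I would analyse the four blocks according to the behaviour of $\mathrm{sgn}(t-s)$ on the corresponding rectangle. On the two diagonal blocks the variables $t,s$ lie in a common interval of length $\pi$, so $t-s\in(-\pi,\pi)$ and $\mathrm{sgn}(t-s)$ is the genuine sign of a point of $(-\pi,\pi)$; by Fact \ref{stz compute fact} one has $\mathrm{sgn}(u)\hat{x}(u)=2i\widehat{\mathcal{H}_dx}(u)$ on $(-\pi,\pi)$, and since $\widehat{\mathcal{H}_dx}$ is already $2\pi$-periodic, this identifies the kernel of each diagonal block with $2i$ times the convolution kernel $\frac1{2\pi}\widehat{\mathcal{H}_dx}(t-s)$ of $(\mathcal{H}_dx)(\mathcal{D})$, compressed to that corner; hence $pT(a)p=2ip(\mathcal{H}_dx)(\mathcal{D})p$ and $qT(a)q=2iq(\mathcal{H}_dx)(\mathcal{D})q$. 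On the off-diagonal blocks the sign is constant: for $t\in(0,\pi)$ and $s\in(-\pi,0)$ one has $t-s\in(0,2\pi)$, so $\mathrm{sgn}(t-s)\equiv1$ and the kernel of $pT(a)q$ equals $K(t,s)$ restricted to that corner, i.e. $pT(a)q=px(\mathcal{D})q$; for $t\in(-\pi,0)$ and $s\in(0,\pi)$ one has $t-s\in(-2\pi,0)$, so $\mathrm{sgn}(t-s)\equiv-1$ and $qT(a)p=-qx(\mathcal{D})p$. Adding the four contributions gives the asserted identity.

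The delicate point — and, I expect, exactly where \cite[Lemma 22]{STZ} went astray — is that $a$ is a convolution operator on $\mathbb{T}$ whose kernel $\hat{x}$ is $2\pi$-periodic, whereas $T$ treats $a$ as an honest integral operator on $(-\pi,\pi)$ and multiplies by $\mathrm{sgn}(t-s)$ with $t-s$ ranging over $(-2\pi,2\pi)$, where $\mathrm{sgn}$ is \emph{not} periodic. One may therefore not globally replace $\mathrm{sgn}(t-s)\hat{x}(t-s)$ by $2i\widehat{\mathcal{H}_dx}(t-s)$; this is legitimate only on the two diagonal corners, and the off-diagonal corners genuinely contribute the extra terms $px(\mathcal{D})q$ and $-qx(\mathcal{D})p$. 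Everything else is routine matching of integral kernels, so the only real care is needed in tracking these constant signs on the two off-diagonal rectangles and in justifying the density reduction to finitely supported $x$.
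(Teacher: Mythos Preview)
Your proposal is correct and follows essentially the same route as the paper's proof: reduce by density to square-summable (indeed finitely supported) $x$, write $a=x(\mathcal{D})$ as a convolution operator with kernel $\frac{1}{2\pi}\hat{x}(t-s)$, split $T(a)$ into the four corners determined by $p$ and $q$, invoke Fact~\ref{stz compute fact} on the diagonal blocks where $t-s\in(-\pi,\pi)$, and observe that $\mathrm{sgn}(t-s)$ is constant on each off-diagonal block. The only cosmetic difference is that the paper decomposes $a$ first and applies $T$ to each piece (computing $T(pap)$, $T(paq)$, etc.), whereas you decompose $T(a)$ directly; since compression by $p$ and $q$ commutes with pointwise multiplication of the kernel by $\mathrm{sgn}(t-s)$, these amount to the same computation.
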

\begin{proof} Suppose first that $x\in \ell_2(\mathbb{Z})$ and, therefore, $\hat{x}\in L_2(\mathbb{T}).$ One can write $a=x(\mathcal{D})$ as an integral operator of convolution type
$$(af)(t)=\frac1{2\pi}\int_{\mathbb{T}}\hat{x}(t-s)f(s)ds.$$

By the definition of $p$ and $q,$ we have
$$(papf)(t)=\frac1{2\pi}\int_{\mathbb{T}}\chi_{(0,\pi)}(t)\chi_{(0,\pi)}(s)\hat{x}(t-s)f(s)ds$$
and
$$(qaqf)(t)=\frac1{2\pi}\int_{\mathbb{T}}\chi_{(-\pi,0)}(t)\chi_{(-\pi,0)}(s)\hat{x}(t-s)f(s)ds.$$
If $t,s\in(0,\pi)$ or $t,s\in(-\pi,0),$ then $t-s\in(-\pi,\pi).$ Thus, by Fact \ref{stz compute fact},
$${\rm sgn}(t-s)\hat{x}(t-s)=2i\widehat{\mathcal{H}_dx}(t-s)$$
whenever $t,s\in(0,\pi)$ or $t,s\in(-\pi,0).$ By the definition of triangular truncation operator, we have
$$(T(pap)f)(t)=\frac{2i}{2\pi}\int_{\mathbb{T}}\chi_{(0,\pi)}(t)\chi_{(0,\pi)}(s)\widehat{\mathcal{H}_dx}(t-s)f(s)ds$$
and
$$(T(qaq)f)(t)=\frac{2i}{2\pi}\int_{\mathbb{T}}\chi_{(-\pi,0)}(t)\chi_{(-\pi,0)}(s)\widehat{\mathcal{H}_dx}(t-s)f(s)ds.$$
In other words,
\begin{equation}\label{stz meq1}
T(pap)=2ip(\mathcal{H}_dx)(\mathcal{D})p,\quad T(qaq)=2iq(\mathcal{H}_dx)(\mathcal{D})q.
\end{equation}

Similarly,
$$(paqf)(t)=\frac1{2\pi}\int_{\mathbb{T}}\chi_{(0,\pi)}(t)\chi_{(-\pi,0)}(s)\hat{x}(t-s)f(s)ds$$
and
$$(qapf)(t)=\frac1{2\pi}\int_{\mathbb{T}}\chi_{(-\pi,0)}(t)\chi_{(0,\pi)}(s)\hat{x}(t-s)f(s)ds.$$
Obviously, we have
$${\rm sgn}(t-s)=1,\quad t\in(0,\pi),\quad s\in(-\pi,0)$$
and
$${\rm sgn}(t-s)=-1,\quad t\in(-\pi,0),\quad s\in(0,\pi).$$
By the definition of triangular truncation operator, we get
$$(T(paq)f)(t)=\frac1{2\pi}\int_{\mathbb{T}}\chi_{(0,\pi)}(t)\chi_{(-\pi,0)}(s)\hat{x}(t-s)f(s)ds$$
and
$$(T(qap)f)(t)=-\frac1{2\pi}\int_{\mathbb{T}}\chi_{(-\pi,0)}(t)\chi_{(0,\pi)}(s)\hat{x}(t-s)f(s)ds.$$
In other words,
\begin{equation}\label{stz meq2}
T(paq)=px(\mathcal{D})q,\quad T(qap)=-qx(\mathcal{D})p.
\end{equation}

Combining \eqref{stz meq1} and \eqref{stz meq2}, we obtain
\begin{eqnarray}\begin{split}\label{stz meq3}
T(a)&=T(pap)+T(qaq)+T(paq)+T(qap)\\
&=2ip(\mathcal{H}_dx)(\mathcal{D})p+2iq(\mathcal{H}_dx)(\mathcal{D})q+px(\mathcal{D})q-qx(\mathcal{D})p.
\end{split}\end{eqnarray}
This proves the assertion for $x\in \ell_2(\mathbb{Z}).$ 

Define the mappings $L_1,L_2:\Lambda_{{\rm log}}(\mathbb{Z})\to B(L_2(-\pi,\pi))$ by setting
$$L_1:x\to T(x(\mathcal{D})),\quad x\in\Lambda_{{\rm log}}(\mathbb{Z})$$
and
$$L_2:x\to 2ip(\mathcal{H}_dx)(\mathcal{D})p+2iq(\mathcal{H}_dx)(\mathcal{D})q+px(\mathcal{D})q-qx(\mathcal{D})p,\quad x\in\Lambda_{{\rm log}}(\mathbb{Z}).$$
By Theorems 11 and 14 in \cite{STZ}, the operator $T:\Lambda_{{\rm log}}(B(L_2(-\pi,\pi)))\to B(L_2(-\pi,\pi))$ is bounded. Obviously,
$$\|L_1x\|_{\infty}\leq\|T\|_{\Lambda_{{\rm log}}(B(L_2(-\pi,\pi)))\to B(L_2(-\pi,\pi))}\|x\|_{\Lambda_{{\rm log}}},\quad x\in\Lambda_{{\rm log}}(\mathbb{Z}).$$
Similarly, by the $\ell_1\to \ell_{1,\infty}$ estimate for $\mathcal{H}_d$ and Theorem 14 in \cite{STZ} we have that $\mathcal{H}_d:\Lambda_{{\rm log}}(\mathbb{Z})\to \ell_{\infty}(\mathbb{Z})$ is a bounded operator. Obviously,
$$\|L_2x\|_{\infty}\leq 2\|\mathcal{H}_dx\|_{\infty}+2\|x\|_{\infty}\leq 2(1+\|\mathcal{H}_d\|_{\Lambda_{{\rm log}}\to\ell_{\infty}})\|x\|_{\Lambda_{{\rm log}}},\quad x\in\Lambda_{{\rm log}}(\mathbb{Z}).$$
By \eqref{stz meq3}, we have $L_1=L_2$ on $\ell_2(\mathbb{Z}).$ Since $\ell_2(\mathbb{Z})$ is dense in $\Lambda_{{\rm log}}(\mathbb{Z})$ and since both $L_1$ and $L_2$ are bounded, it follows that $L_1=L_2.$
\end{proof}

The following result replaces \cite[Lemma 22]{STZ}.
\begin{lem}\label{stz fix lemma} If $x\in\Lambda_{{\rm log}}(\mathbb{Z})$ is such that $x|_{2\mathbb{Z}}=0,$ then
$$\mu\big(p(\mathcal{H}_dx)(\mathcal{D})p\big)=\frac12\mu(\mathcal{H}_dx),$$
where $p=M_{\chi_{(0,\pi)}}$ is a multiplication operator.
\end{lem}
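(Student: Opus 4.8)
The plan is to turn the hypothesis $x|_{2\mathbb Z}=0$ into a concrete structural statement about the Fourier multiplier $(\mathcal H_dx)(\mathcal D)$. Set $y:=\mathcal H_dx$; by the proof of Lemma \ref{stz mistake lemma} we have $y\in\ell_\infty(\mathbb Z)$, so $y(\mathcal D)$ is a bounded Fourier multiplier on $L_2(\mathbb T)$. Looking at \eqref{H dis}, if $n$ is odd then the condition $m-n=1\bmod 2$ forces $m$ even, hence $x(m)=0$; thus $(\mathcal H_dx)(n)=0$ for every odd $n$, i.e. $y$ is supported on $2\mathbb Z$. Next I would introduce the translation unitary $V$ on $L_2(\mathbb T)$ given by $(Vf)(t)=f(t-\pi)$. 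It is self-adjoint with $V^2=1$, a direct computation gives $VpV=q$ (where $q:=M_{\chi_{(-\pi,0)}}=1-p$, since $(0,\pi)+\pi=(-\pi,0)$ on $\mathbb T$), and since $V$ acts on the mode $e^{int}$ by the scalar $(-1)^n$ it is itself a Fourier multiplier. Because the symbol of $y(\mathcal D)$ vanishes on the odd integers, this yields the two key identities $Vy(\mathcal D)=y(\mathcal D)V=y(\mathcal D)$.

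The heart of the argument is the block decomposition $y(\mathcal D)=(p+q)y(\mathcal D)(p+q)$. Writing $b:=py(\mathcal D)p$, regarded as an operator on $pL_2(\mathbb T)$, I would use $q=VpV$ together with $Vy(\mathcal D)=y(\mathcal D)V=y(\mathcal D)$ to compute
$py(\mathcal D)q=py(\mathcal D)pV$, $qy(\mathcal D)p=Vpy(\mathcal D)p$, $qy(\mathcal D)q=Vpy(\mathcal D)pV$.
Since $V$ restricts to a unitary $qL_2(\mathbb T)\to pL_2(\mathbb T)$, transporting the second coordinate of $L_2(\mathbb T)=pL_2(\mathbb T)\oplus qL_2(\mathbb T)\cong pL_2(\mathbb T)\otimes\mathbb C^2$ along this unitary turns all four blocks into $b$, so that $y(\mathcal D)$ becomes unitarily equivalent to $b\otimes\begin{pmatrix}1&1\\1&1\end{pmatrix}$.

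To finish, I would diagonalise the $2\times 2$ matrix: $\begin{pmatrix}1&1\\1&1\end{pmatrix}$ is unitarily equivalent to $\mathrm{diag}(2,0)$, hence $y(\mathcal D)$ is unitarily equivalent to $2b\oplus 0$. Passing to singular value functions and using that $\mu$ is a unitary invariant with $\mu(2b\oplus0)=2\mu(b)$, together with $\mu(y(\mathcal D))=\mu(\mathcal H_dx)$ (the multiplier $y(\mathcal D)$ has eigenvalues $\{y(n)\}_{n\in\mathbb Z}$), we obtain $\mu(\mathcal H_dx)=2\mu\big(py(\mathcal D)p\big)=2\mu\big(p(\mathcal H_dx)(\mathcal D)p\big)$, which is the assertion.

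I expect the only delicate point to be the bookkeeping in the block identification: one must keep careful track of the unitary $V|_{qL_2(\mathbb T)\to pL_2(\mathbb T)}$ so that every one of the four blocks genuinely collapses to the single operator $b$ (and in particular that the off-diagonal blocks carry a $+$ sign, giving $\begin{pmatrix}1&1\\1&1\end{pmatrix}$ rather than a matrix with singular values $\sqrt2,\sqrt2$). It is precisely here that the hypothesis $x|_{2\mathbb Z}=0$, equivalently $Vy(\mathcal D)=y(\mathcal D)$, is indispensable; without it the off-diagonal blocks do not reduce to $b$ and the clean factor $\tfrac12$ disappears.
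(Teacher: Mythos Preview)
Your argument is correct. It is, however, a genuinely different route from the paper's. The paper works directly on the compressed operator: noting that $y:=\mathcal H_dx$ vanishes on the odd integers (equivalently $\widehat{y}$ is $\pi$-periodic), it exhibits the explicit orthonormal basis $f_n(t)=\pi^{-1/2}e^{2int}$ of $L_2(0,\pi)$ and verifies via the convolution kernel that $p\,y(\mathcal D)\,p\,f_n=\tfrac12 y(2n)f_n$, so the spectrum of the compression is $\{\tfrac12 y(2n)\}_{n\in\mathbb Z}$ and the singular-value identity follows at once. You instead leave the compression alone and analyse the full multiplier: the same support condition, read as $Vy(\mathcal D)=y(\mathcal D)V=y(\mathcal D)$ for the half-period translation $V$, lets you conjugate $y(\mathcal D)$ to $b\otimes\begin{pmatrix}1&1\\1&1\end{pmatrix}$ with $b=p\,y(\mathcal D)\,p$, and then the eigenvalues $2,0$ of the $2\times2$ block give $y(\mathcal D)\cong 2b\oplus 0$. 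Both proofs rest on the single crucial observation $y|_{2\mathbb Z+1}=0$; the paper's version is a short kernel computation yielding the explicit eigenbasis, while yours is coordinate-free, makes the origin of the factor $\tfrac12$ transparent (it is the reciprocal of the nonzero eigenvalue of the all-ones matrix), and would adapt immediately to analogous symmetry situations.
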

\begin{proof} The following observation is crucial: $\mathcal{H}_dx|_{2\mathbb{Z}+1}=0$ (it follows from the definition of $\mathcal{H}_d$ and the assumption $x|_{2\mathbb{Z}}=0$). Equivalently, the distribution $\widehat{\mathcal{H}_dx}$ is $\pi$-periodic. Now define an orthonormal basis $\{f_n\}_{n\in\mathbb{Z}}$ in the Hilbert space $L_2(0,\pi)$ by setting $f_n(t)=\pi^{-\frac12}e^{2int},$ $t\in(0,\pi).$ We claim that $\{f_n\}_{n\in\mathbb{Z}}$ is an eigenbasis for the operator $p(\mathcal{H}_dx)(\mathcal{D})p.$ By linearity and continuity, it suffices to prove this for the case when $x$ is a finitely supported sequence. In this case, $x\in \ell_2(\mathbb{Z})$ and, therefore, $\widehat{\mathcal{H}_dx}$ is a $\pi$-periodic function. Hence, for any $t\in(0,\pi),$ we have
\begin{align*}
(p(\mathcal{H}_dx)(\mathcal{D})p)f_n(t)&=\frac1{2\pi}\int_0^{\pi}\widehat{\mathcal{H}_dx}(t-s)f_n(s)ds=\frac1{2\pi^{\frac12}}f_n(t)\cdot\int_0^{\pi}\widehat{\mathcal{H}_dx}(-s)f_n(s)ds\\
&=\frac1{2\pi}f_n(t)\cdot\sum_{m\in\mathbb{Z}}(\mathcal{H}_dx)(2m)\int_0^{\pi}e^{2i(n-m)s}ds=\frac12(\mathcal{H}_dx)(2n)f_n(t).
\end{align*}
which proves the claim. The assertion follows from the claim.
\end{proof}

\begin{proof}[Proof of Theorem \ref{tt lower thm}] As the statement of the theorem depends only on $\mu(x),$ we may assume without loss of generality that
$$x(n)=
\begin{cases}
\mu(\frac{n-1}{2},x),& n\geq1\mbox{ is odd}\\
0,& n\mbox{ is even}\\
0,& n\leq 0
\end{cases}$$
and let $a=x(\mathcal{D}).$ Obviously, $x|_{2\mathbb{Z}}=0$ and $\mu(a)=\mu(x).$

On the other hand, it follows from Lemma \ref{stz mistake lemma} and from the equalities $q=1-p$ that
$$p\cdot T(a)\cdot p=2ip(\mathcal{H}_dx)p.$$
Thus, by \cite[Formulas (2.2) and (2.3), p. 27]{GK} we have
\begin{equation}\label{ttlt eq1}
\mu(T(a))\geq\mu(p\cdot T(a)\cdot p)\stackrel{L.\ref{stz mistake lemma}}{=}2\mu(p(\mathcal{H}_dx)(\mathcal{D})p)\stackrel{L.\ref{stz fix lemma}}{=}\mu(\mathcal{H}_dx).
\end{equation}
Obviously,
$$\frac1{2m+2n+1}\geq\frac1{2(m+n+2)}\geq\frac14\min\left\{\frac1{n+1},\frac1{m+1}\right\},\quad m,n\geq 0.$$
Then,
$$(\mathcal{H}_dx)(-2n)=\frac1{\pi}\sum_{m\geq0}\frac{\mu(m,x)}{2m+2n+1}\geq\frac1{4\pi}\sum_{m\geq0}\mu(m,x)\min\left\{\frac1{n+1},\frac1{m+1}\right\}, n\in\mathbb{Z}_+.$$

Thus,
$$(\mathcal{H}_dx)(-2n)\geq \frac1{8\pi}(S_d\mu(x))(n),\quad n\geq0.$$
It is now immediate that
\begin{equation}\label{ttlt eq2}
\mu(\mathcal{H}_dx)\geq\frac1{8\pi}S_d\mu(x).
\end{equation}
Combining \eqref{ttlt eq1} and \eqref{ttlt eq2}, we complete the proof.	
\end{proof}

\end{document}